\documentclass[12pt]{amsart}
\usepackage{amsmath,amssymb,amsbsy,amsfonts,amsthm,latexsym,amsopn,amstext,
                            amsxtra,euscript,amscd}


%
%
\newfont{\teneufm}{eufm10}
\newfont{\seveneufm}{eufm7}
\newfont{\fiveeufm}{eufm5}
%
%
\newfam\eufmfam
                 \textfont\eufmfam=\teneufm \scriptfont\eufmfam=\seveneufm
                 \scriptscriptfont\eufmfam=\fiveeufm
%
%
 
%
%


\def\bbbc{{\mathchoice {\setbox0=\hbox{$\displaystyle\rm C$}\hbox{\hbox
to0pt{\kern0.4\wd0\vrule height0.9\ht0\hss}\box0}}
{\setbox0=\hbox{$\textstyle\rm C$}\hbox{\hbox
to0pt{\kern0.4\wd0\vrule height0.9\ht0\hss}\box0}}
{\setbox0=\hbox{$\scriptstyle\rm C$}\hbox{\hbox
to0pt{\kern0.4\wd0\vrule height0.9\ht0\hss}\box0}}
{\setbox0=\hbox{$\scriptscriptstyle\rm C$}\hbox{\hbox
to0pt{\kern0.4\wd0\vrule height0.9\ht0\hss}\box0}}}}
\def\bbbq{{\mathchoice {\setbox0=\hbox{$\displaystyle\rm
Q$}\hbox{\raise
0.15\ht0\hbox to0pt{\kern0.4\wd0\vrule height0.8\ht0\hss}\box0}}
{\setbox0=\hbox{$\textstyle\rm Q$}\hbox{\raise
0.15\ht0\hbox to0pt{\kern0.4\wd0\vrule height0.8\ht0\hss}\box0}}
{\setbox0=\hbox{$\scriptstyle\rm Q$}\hbox{\raise
0.15\ht0\hbox to0pt{\kern0.4\wd0\vrule height0.7\ht0\hss}\box0}}
{\setbox0=\hbox{$\scriptscriptstyle\rm Q$}\hbox{\raise
0.15\ht0\hbox to0pt{\kern0.4\wd0\vrule height0.7\ht0\hss}\box0}}}}
\def\bbbt{{\mathchoice {\setbox0=\hbox{$\displaystyle\rm
T$}\hbox{\hbox to0pt{\kern0.3\wd0\vrule height0.9\ht0\hss}\box0}}
{\setbox0=\hbox{$\textstyle\rm T$}\hbox{\hbox
to0pt{\kern0.3\wd0\vrule height0.9\ht0\hss}\box0}}
{\setbox0=\hbox{$\scriptstyle\rm T$}\hbox{\hbox
to0pt{\kern0.3\wd0\vrule height0.9\ht0\hss}\box0}}
{\setbox0=\hbox{$\scriptscriptstyle\rm T$}\hbox{\hbox
to0pt{\kern0.3\wd0\vrule height0.9\ht0\hss}\box0}}}}
\def\bbbs{{\mathchoice
{\setbox0=\hbox{$\displaystyle     \rm S$}\hbox{\raise0.5\ht0\hbox
to0pt{\kern0.35\wd0\vrule height0.45\ht0\hss}\hbox
to0pt{\kern0.55\wd0\vrule height0.5\ht0\hss}\box0}}
{\setbox0=\hbox{$\textstyle        \rm S$}\hbox{\raise0.5\ht0\hbox
to0pt{\kern0.35\wd0\vrule height0.45\ht0\hss}\hbox
to0pt{\kern0.55\wd0\vrule height0.5\ht0\hss}\box0}}
{\setbox0=\hbox{$\scriptstyle      \rm S$}\hbox{\raise0.5\ht0\hbox
to0pt{\kern0.35\wd0\vrule height0.45\ht0\hss}\raise0.05\ht0\hbox
to0pt{\kern0.5\wd0\vrule height0.45\ht0\hss}\box0}}
{\setbox0=\hbox{$\scriptscriptstyle\rm S$}\hbox{\raise0.5\ht0\hbox
to0pt{\kern0.4\wd0\vrule height0.45\ht0\hss}\raise0.05\ht0\hbox
to0pt{\kern0.55\wd0\vrule height0.45\ht0\hss}\box0}}}}
\def\bbbz{{\mathchoice {\hbox{$\sf\textstyle Z\kern-0.4em Z$}}
{\hbox{$\sf\textstyle Z\kern-0.4em Z$}}
{\hbox{$\sf\scriptstyle Z\kern-0.3em Z$}}
{\hbox{$\sf\scriptscriptstyle Z\kern-0.2em Z$}}}}

%
%
 \newtheorem{thm}{Theorem}
 
 \newtheorem{lem}[thm]{Lemma}
 
 \theoremstyle{definition}
 
 \theoremstyle{remark}

%
%

\def\cX{{\mathcal X}}


\def\({\left(}
\def\){\right)}
\def\[{\left[}
\def\]{\right]}
\def\<{\langle}
\def\>{\rangle}

\def\fB{{\mathfrak B}}

\def\F{\mathbb{F}}

\def\Z{\mathbb{Z}}

\def\ep{{\mathbf{\,e}}}

\def\vec#1{\mathbf{#1}}

\def\mand{\qquad\mbox{and}\qquad}


\begin{document}

\title[Points on  Markoff-Hurwitz and Dwork Hypersurfaces]{On the Density
of Integer 
Points on the Generalised Markoff-Hurwitz and Dwork Hypersurfaces}

 \author[I. E. Shparlinski] {Igor E. Shparlinski}

\address{Department of Pure Mathematics, University of New South Wales,
Sydney, NSW 2052, Australia}
\email{igor.shparlinski@unsw.edu.au}

\begin{abstract}  
We use bounds of mixed  character sums modulo a prime $p$
to estimate the density of integer points on the hypersurface 
$$
f_1(x_1) + \ldots + f_n(x_n) =a x_1^{k_1} \ldots x_n^{k_n} 
$$
for some polynomials $f_i \in \Z[X]$, nonzero integer $a$
and positive integers $k_i$ 
$i=1, \ldots, n$. 
In the case of 
$$
f_1(X) = \ldots = f_n(X) = X^2\mand
k_1 = \ldots = k_n =1
$$ 
the above congruence is known as the Markoff-Hurwitz hypersurface,
while for 
$$
f_1(X) = \ldots = f_n(X) = X^n\mand
k_1 = \ldots = k_n =1
$$ 
it is known as the Dwork hypersurface. Our result is 
substantially stronger than those known for 
general supersurfaces.
\end{abstract}

\subjclass[2010]{11D45, 11D72,   11L40}

\keywords{Integer points on hypersurfaces, multiplicative character sums}

\maketitle

\section{Introduction}

Studying the density of integer and rational 
points $(x_1, \ldots, x_n)$ on hypersurfaces 
has always been an active 
area of research, where many rather involved methods
have led to remarkable achievements, see~\cite{Brow,BrHBSa,H-B,H-BP1,Mar1,Mar2,Salb1,Salb2,Tsch} 
and references therein. More precisely, given a  hypersurface 
$$
F(x_1, \ldots, x_n) = 0
$$
defined by a polynomial $F \in \Z[X_1, \ldots, X_n]$ in $n$ variables,
the goal is to estimate the number $N_F(\fB)$ of solutions 
$(x_1, \ldots, x_n) \in \Z^n$ that fall in a hypercube $\fB$ of the 
form 
\begin{equation}
\label{eq:box}
 \fB = [u_1+1,u_1+h]\times \ldots \times [u_n+1, u_n+h]. 
\end{equation}
Unfortunately, even in the most favourable situation, 
the currently known general approaches lead only to a  bound of 
the form $N_F(\fB) = O\(h^{n-2+\varepsilon}\)$ for any fixed 
$\varepsilon>0$ or even weaker, see~\cite{BrHBSa,H-BP1,Salb1,Salb2}.
For some special types of hypersurfaces the strongest known bounds are due 
 Heath-Brown~\cite{H-B} and 
Marmon~\cite{Mar1,Mar2}.
For example, for hypercubes around  the origin, Marmon~\cite{Mar2}
gives a bound of the form  $N_F(\fB) = O\(h^{n-4+\delta_n}\)$ for 
a class of hypersurfaces, 
with some explicit function $\delta_n$ such that $\delta_n  \sim 37/n$ as 
$n \to \infty$.
Combining this bound with some previous results and 
methods, for a certain class of hypersurfaces,  
Marmon~\cite{Mar2} also derives the bound 
$N_F(\fB) = O\(h^{n-4+\delta_n}+ h^{n-3+\varepsilon}\)$
which holds for an arbitrary hypercube $\fB$ with any fixed $\varepsilon>0$ 
and the implied constant that depends only of $\deg F$, $n$ and $\varepsilon$
(note that  $\delta_n> 1$ for $n < 29$). 
We also remark that when the number of variables $n$ is exponentially large 
compared to $d$ and the highest degree form of $F$ is non-singular, then
the methods developed as the continuation 
of the work of Birch~\cite{Birch} lead to much stronger 
bounds, of essentially optimal order of magnitude.

Here, we show that in some interesting special cases, to which 
further developments of~\cite{Birch} do not apply
(as the highest degree form is singular and the number of variables
is not large enough) 
a modular approach  leads to a bound of the form 
$$
N_F(\fB) = O\(h^{n-4+15/(n-6)^{1/2}}\)
$$
for a sufficiently large $n$. We note that although this bound is of a similar shape 
as that of Mormon~\cite[Theorem~1.1]{Mar2} these results apply to very different 
classes of hypersurfaces (and as we have mentioned, the result of~\cite{Mar2}
applies only to hypercubes $\fB$ at the origin).

More precisely we concentrate on hypersurfaces of the form
\begin{equation}
\label{eq:MHD eq}
f_1(x_1) + \ldots + f_n(x_n)  = ax_1^ {k_1} \ldots x_n^{k_n} 
\end{equation}
defined by some polynomials $f_i \in \Z[X]$, a non-zero integer $a$
and positive integers $k_i$, $i=1, \ldots, n$.
In particula, we use $N_{a, \vec{f}, \vec{k}}(\fB)$
to denote the number of integer solutions to~\eqref{eq:MHD eq}
with $(x_1, \ldots, x_n) \in \fB$, where $\vec{f} = (f_1,\ldots, f_n)$
and $\vec{k} = (k_1,\ldots, k_n)$.

In the case of 
$$
f_1(X) = \ldots = f_n(X) = X^2\mand
k_1 = \ldots = k_n =1
$$ 
the equation~\eqref{eq:MHD eq}  defines the {\it Markoff-Hurwitz hypersurface\/},
see~\cite{Bar1,Bar2,Bar3,Cao}, where various questions related to
these   hypersurfaces have been investigated.

Furthermore,   for 
$$
f_1(X) = \ldots = f_n(X) = X^n\mand
k_1 = \ldots = k_n =1
$$ 
the equation~\eqref{eq:MHD eq} is known as the {\it Dwork hypersurface\/}, which has been  intensively 
studied by various authors~\cite{Gou,HS-BT,Katz,Kloost,Yu}, in particular, 
as an example of a {\it Calabi-Yau variety\/}.

Here, we use some ideas from~\cite{Shp}  combined 
with some the results of~\cite{BGKS} to show that 
if  
$$
\max \deg f_i \le D, \qquad i=1, \ldots, n, 
$$
and $k_1,\ldots, k_n\ge $ are   odd, 
then, for an  arbitrary  $\varepsilon> 0$  and $n \ge n_0(\varepsilon, D)$, 
where $n_0(\varepsilon, D)$ depends only on $\varepsilon$ and $D$, for any
hypercube $\fB$ of the form~\eqref{eq:box} we have
$$
N_{a, \vec{f}, \vec{k}}(\fB) = O\(h^{n-4+\varepsilon}\)
$$
uniformly over $u_1, \ldots, u_n$. 

%


Throughout the paper, the implied constants in the symbols ``$O$'', ``$\ll$'' and
``$\gg$'' may depend on some positive real parameters $\varepsilon$  and $\delta$, 
the  polynomials $\deg f_i$  and the exponents $k_{i}$ in~\eqref{eq:MHD eq},
$i=1, \ldots, n$.  We recall that
the expressions $A=O(B)$, $A \ll B$ and $B \gg A$ are each equivalent to the
statement that $|A|\le cB$ for some constant $c$.

\section{Character and Exponential Sums} 

Here we fix some sufficiently large prime $p$ and
let $\cX$ be the set of multiplicative characters modulo $p$
and let $\cX^* = \cX \setminus \{\chi_0\}$ be the 
set of non-principal characters (we set $\chi(0) = 0$ for all $\chi \in \cX$).

We also denote
$$
\ep(z) = \exp(2 \pi i z/p).
$$
We appeal to~\cite{IwKow} for a background 
on the basic properties of multiplicative characters and
exponential functions, such as orthogonality. 

First we need the following well-know property of Gauss sums
$$
G(\chi, \lambda) = \sum_{y=1}^{p-1} \chi(y)\ep(\lambda y), \qquad 
\chi \in \cX, \ \lambda \in \F_p,
$$
see~\cite[Section~3.4]{IwKow}.

\begin{lem}
   \label{lem:Gauss}
For  any $\chi \in \cX$ and $\lambda \in \F_p$, we have
$$
|G(\chi, \lambda)| = 
\left\{\begin{array}{ll}
1, & \text{for $\chi=\chi_0$, $\lambda \ne 0$},\\
0, & \text{for $\chi\ne \chi_0$, $\lambda = 0$},\\
p^{1/2}, & \text{for $\chi\ne \chi_0$, $\lambda \ne 0$}. 
\end{array}
\right. 
$$
\end{lem}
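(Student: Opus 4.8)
The plan is to treat the three listed cases separately, since each is governed by a different orthogonality relation; throughout I recall that $\ep(z) = \exp(2\pi i z/p)$ and that the convention $\chi(0)=0$ makes the summation range $y=1,\ldots,p-1$ precisely the set of nonzero residues. First I would dispose of the two degenerate cases. When $\chi=\chi_0$ and $\lambda\neq 0$, the multiplicative factor is identically $1$ on nonzero $y$, so
$$
G(\chi_0,\lambda)=\sum_{y=1}^{p-1}\ep(\lambda y)=\left(\sum_{y=0}^{p-1}\ep(\lambda y)\right)-1=-1,
$$
because the complete additive sum vanishes for $\lambda\neq 0$; hence $|G(\chi_0,\lambda)|=1$. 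When $\chi\neq\chi_0$ and $\lambda=0$, the additive factor is trivial and $G(\chi,0)=\sum_{y=1}^{p-1}\chi(y)=0$ by orthogonality of the nonprincipal character $\chi$, so $|G(\chi,0)|=0$.

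The substantive case is $\chi\neq\chi_0$, $\lambda\neq 0$, and here I would compute the square of the modulus rather than $G$ itself. Starting from
$$
|G(\chi,\lambda)|^2=\sum_{y=1}^{p-1}\sum_{z=1}^{p-1}\chi(y)\overline{\chi(z)}\,\ep(\lambda(y-z)),
$$
I substitute $y=tz$ with $t$ ranging over the nonzero residues (legitimate since $z\neq 0$), which decouples the two characters: $\chi(y)\overline{\chi(z)}=\chi(t)$ and the phase becomes $\ep(\lambda z(t-1))$. Performing the inner sum over $z$ first, $\sum_{z=1}^{p-1}\ep(\lambda(t-1)z)$ equals $p-1$ when $t=1$ and $-1$ otherwise, since $\lambda(t-1)\equiv 0$ exactly when $t=1$. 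Therefore
$$
|G(\chi,\lambda)|^2=(p-1)\chi(1)-\sum_{t\neq 1}\chi(t)=(p-1)+1=p,
$$
using $\chi(1)=1$ together with $\sum_{t=1}^{p-1}\chi(t)=0$. Taking square roots gives $|G(\chi,\lambda)|=p^{1/2}$.

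I do not anticipate any genuine obstacle, as this is the classical evaluation of the Gauss sum magnitude: the only place the hypotheses are used is in the vanishing of the relevant complete character sum (additive when $\lambda\neq 0$, multiplicative when $\chi\neq\chi_0$), which is exactly what distinguishes the three regimes. The single point worth being careful about is the multiplicative substitution $y\mapsto tz$ in the main case, which is precisely what separates the two characters so that both orthogonality relations can be applied cleanly; the omitted combination $\chi=\chi_0,\ \lambda=0$ is simply not claimed, since there $G=p-1$.
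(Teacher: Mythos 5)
Your proof is correct. Note that the paper gives no proof of this lemma at all; it is stated as a well-known property of Gauss sums with a pointer to Iwaniec--Kowalski, Section~3.4, so there is no in-text argument to compare against. Your treatment is the standard one: the two degenerate cases follow from additive and multiplicative orthogonality respectively, and the main case is the classical evaluation of $|G(\chi,\lambda)|^2$ via the substitution $y=tz$, with the convention $\chi(0)=0$ correctly keeping the summation over nonzero residues.
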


We also need a  bound of exponential sums twisted with a
multiplicative character has been given by D. R. Heath-Brown and  
Pierce~\cite{H-BP2} is an improvement of a recent resuly of Chang~\cite{Chang}.
We present a result of~\cite{H-BP2} in a somewhat simplified form,
which is sufficient for our applications. 

\begin{lem}
   \label{lem:MixedSum}
There is a function $\kappa(z)$ with 
$$
\lim_{z\to 0} \kappa(z)/z^2 = 1
$$
such that for  any   $\chi \in \cX^*$,  polynomial $F(X) \in \F_p[X]$
of degree $s$ 
and  integers $u$ and $h$ with $p^{1/2} \ge h \ge p^{1/4+\delta}$, we have
$$\sum_{x=u+1}^{u+h} \chi(x) \ep(F(x)) \ll 
h p^{-\kappa(\delta)}.
$$
\end{lem}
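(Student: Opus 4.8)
The plan is to prove this as a Burgess-type estimate for mixed sums; the bound is due to Heath-Brown and Pierce~\cite{H-BP2}, refining Chang~\cite{Chang}, and the method is the Burgess amplification argument coupled with Weil's bound for complete character sums. Write $S=\sum_{x=u+1}^{u+h}\chi(x)\ep(F(x))$. The first step is the Burgess shift: for parameters $A,B$ to be chosen, an integer $a\in[1,A]$ and a prime $b\in(B,2B]$, replacing $x$ by $x+ab$ moves the interval $[u+1,u+h]$ by at most $AB$ terms, so, since $|\chi(x)\ep(F(x))|\le 1$, the shifted sum differs from $S$ by $O(AB)$. Averaging over $a$ and $b$ and using multiplicativity in the form $\chi(x+ab)=\chi(b)\chi(b^{-1}x+a)$ transfers the multiplicative character to the new variable $v\equiv b^{-1}x\pmod p$. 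The essential feature of the mixed case appears already here: the additive factor becomes $\ep\!\left(F(b(v+a))\right)$, so it acquires a dependence on the amplifying prime $b$.

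The second step converts the problem into complete sums. Setting $T_b(v)=\sum_{a=1}^{A}\chi(v+a)\ep(F(b(v+a)))$ and applying Hölder's inequality with exponent $2k$ reduces the matter to the $2k$-th moment $\sum_{b}\sum_{v\bmod p}|T_b(v)|^{2k}$. Opening the power and summing $v$ over all of $\F_p$ produces complete mixed sums of the shape
$$
\sum_{v\bmod p}\Big(\prod_{i=1}^{2k}\chi_i(v+a_i)\Big)\,\ep\!\big(G(v)\big),
\qquad \chi_i\in\{\chi,\overline{\chi}\},
$$
where $G\in\F_p[X]$ has degree at most $s$ and coefficients depending on $b$ and the $a_i$. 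By Weil's bound (with Lemma~\ref{lem:Gauss} entering through the standard completion) each such sum is $O\!\big((s+2k)p^{1/2}\big)$ unless it degenerates, and degeneration forces \emph{both} the multiplicative product to be principal \emph{and} $G$ to be constant; this occurs only for the $O_k(A^{k})$ diagonal tuples in which the $a_i$ pair off. The diagonal thus contributes $\asymp A^{k}p$ and the off-diagonal $\ll A^{2k}p^{1/2}$.

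The third step is the optimization, and it is here that the genuine obstacle of the mixed problem sits. Because the additive character depends on $b$, one cannot collapse the amplifier into pure shift-multiplicities the way one does for a plain multiplicative sum; yet it is precisely the primality of $b$ --- through the sparsity of solutions of $b_1^{-1}x_1\equiv b_2^{-1}x_2\pmod p$ --- that is needed to reach the Burgess threshold $p^{1/4}$ rather than the much weaker $p^{1/2}$ of naive completion. The technical heart of~\cite{H-BP2} is to control the multiplicative amplification and the additive oscillation simultaneously, which I would follow rather than reprove. Granting their estimate, one obtains a bound of the familiar Burgess shape $S\ll h^{1-1/k}p^{(k+1)/(4k^{2})+o(1)}$. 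Finally, for $h=p^{1/4+\delta}$ this reads $S/h\ll p^{(1-4\delta k)/(4k^{2})+o(1)}$; the exponent $(1-4\delta k)/(4k^{2})$ is minimized at $k\approx 1/(2\delta)$, where it equals $-\delta^{2}$. Hence $S\ll h\,p^{-\delta^{2}+o(1)}$, and bookkeeping of the lower-order terms yields a function $\kappa$ with $\lim_{z\to 0}\kappa(z)/z^{2}=1$, as claimed; the restriction $h\le p^{1/2}$ merely keeps us in the short-sum regime in which this clean form is stated.
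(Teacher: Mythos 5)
The paper offers no proof of this lemma at all: it is quoted from Heath-Brown and Pierce~\cite{H-BP2} ``in a somewhat simplified form,'' so the only internal content to check is the passage from their theorem to the statement as given. Your proposal is essentially the same approach, since you too defer the technical heart to~\cite{H-BP2}; and the part you add --- the optimization converting their Burgess-shape bound $\ll h^{1-1/r}p^{(r+1)/(4r^2)+o(1)}$ into the stated form --- is correct and is exactly the bookkeeping the paper's simplified statement requires: at $h=p^{1/4+\delta}$ the exponent $(1-4\delta r)/(4r^2)$ is minimized near $r\approx 1/(2\delta)$ with value $-\delta^2$, rounding $r$ to an integer and absorbing the $p^{o(1)}$ factor yields a $\kappa$ with $\kappa(z)/z^2\to 1$, and larger $h\le p^{1/2}$ only improves the relative saving since $h^{-1/r}$ decreases. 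One caveat on your middle section: the per-$b$ H\"older-plus-Weil analysis you sketch (diagonal $\asymp A^k p$, off-diagonal $\ll A^{2k}p^{1/2}$) cannot by itself reach the stated range, because $T_b(v)$ depends on $b$ and the completed moment therefore carries an extra factor of the number of amplifying primes; carried through, that argument is nontrivial only for $h$ around $p^{1/2}$, not down to $p^{1/4+\delta}$. You flag this obstacle explicitly and cite~\cite{H-BP2} for its resolution (where the $b$-dependence of the additive phases is controlled via Vinogradov-type mean value estimates), so this is a candid deferral rather than a gap --- but the sketch should not be read as a proof of their theorem, only as context for the citation, which is precisely the role the lemma plays in the paper as well.
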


We note that we do not impose any conditions on the polynomial 
$F$ in Lemma~\ref{lem:MixedSum}, which, in particular can be a constant 
polynomials (in which case, we also have the Burgess bound, 
of course, see~\cite[Theorem~12.6]{IwKow}).
%


%


\section{Congruences with Products}
\label{sec:cong}

For a prime $p$ and integers $h\ge3$, $\nu
\ge 1$ and $k$, we
denote by $I_{p,\nu}(u,h)$ the number of solutions of the
congruence
\begin{equation*}
\begin{split}
(x_1+u)\ldots(x_{\nu}+u)\equiv & (y_1+u)\ldots(y_{\nu}+u)\not\equiv0 \pmod p,\\
1\le x_j,y_j & \le   h_j,  \qquad j =1, \ldots, \nu.
\end{split}
\end{equation*}

As usual, we use $\pi(T)$ to denote the number of primes $p \le T$.

We need the following estimate  from~\cite{BGKS}:

\begin{lem}
\label{lem:GProdSet AlmostAll} Let $\nu \ge 1$ be a fixed integer.
Then for a sufficiently large positive integer  $T \ge3$,
for all but $o(\pi(T))$ primes $p \le T$  
and any integers $u$ and $h<p$, we have the bound
$$
I_{p,\nu}(u,h) \le \(h^\nu + h^{2\nu-1/2}p^{-1/2}\)h^{o(1)}. 
$$
\end{lem}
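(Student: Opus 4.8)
The plan is to separate the solutions counted by $I_{p,\nu}(u,h)$ according to whether the two products coincide as integers, and to treat the resulting \emph{diagonal} and \emph{off-diagonal} parts by unrelated arguments: the former produces the term $h^{\nu}$ and the latter the term $h^{2\nu-1/2}p^{-1/2}$. (Equivalently one may write $I_{p,\nu}(u,h)=\frac{1}{p-1}\sum_{\chi\in\cX}\bigl|\sum_{x=1}^{h}\chi(x+u)\bigr|^{2\nu}$ and bound this $2\nu$-th moment of an incomplete character sum, the contribution of $\chi_{0}$ being $\ll h^{2\nu}/p$; but the count is more transparent combinatorially.)

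Write $A(\vec x)=\prod_{j=1}^{\nu}(x_j+u)$ and split $I_{p,\nu}(u,h)=D+R(p)$, where $D$ counts the pairs $(\vec x,\vec y)\in[1,h]^{2\nu}$ with $A(\vec x)=A(\vec y)$ as integers, and $R(p)$ counts those with $A(\vec x)\equiv A(\vec y)\pmod p$ but $A(\vec x)\ne A(\vec y)$; in both cases $A(\vec x)\not\equiv0\pmod p$, so the products are nonzero integers. For the diagonal I would invoke the divisor bound: for each of the at most $h^{\nu}$ choices of $\vec x$ the number of $\vec y$ with $A(\vec y)=A(\vec x)$ is at most the number of ordered factorisations of $A(\vec x)$ into $\nu$ factors, which is $\tau_{\nu}(A(\vec x))\le A(\vec x)^{o(1)}=h^{o(1)}$ since $A(\vec x)\le(u+h)^{\nu}$. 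Hence $D\le h^{\nu+o(1)}$.

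For the off-diagonal part I would average over the primes. Put $\Delta=A(\vec x)-A(\vec y)$; the defining condition is $p\mid\Delta$ with $\Delta\ne0$, so for a dyadic range $P\le p\le 2P$ with $P>h$,
$$
\sum_{P\le p\le 2P}R(p)\le\sum_{\substack{\vec x,\vec y\in[1,h]^{\nu}\\ \Delta\ne 0}}\#\{p\in[P,2P]:p\mid\Delta\}\le\sum_{\substack{\vec x,\vec y\\ \Delta\ne0}}\frac{\log|\Delta|}{\log P}\ll_{\nu}h^{2\nu},
$$
using $|\Delta|\le 2(u+h)^{\nu}$, so that each pair contributes $O_{\nu}(1)$ once $u+h\le P^{O(1)}$. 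Since the number of primes in $[P,2P]$ is $\asymp P/\log P$, Markov's inequality then gives, for all but $o\bigl(\pi(2P)-\pi(P)\bigr)$ of these primes, the bound $R(p)\le h^{2\nu}P^{-1}(\log P)\,h^{o(1)}\le h^{2\nu-1/2}p^{-1/2+o(1)}$, the last step using $h^{1/2}P^{-1/2}\le1$. Summing over the $O(\log T)$ dyadic ranges with $P>h$ leaves an exceptional set of size $o(\pi(T))$ and yields the claimed estimate.

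The step I expect to be the main obstacle is uniformity in $u$, in particular for large $|u|$: both the divisor bound for $D$ and the inequality $\#\{p:p\mid\Delta\}\le\log|\Delta|/\log P$ degrade with $\log(u+h)$. The natural fix is to replace $u$ by $u\bmod p$ before forming $A(\vec x)$, so that $|\Delta|\le(2p)^{\nu}$ and only $p^{o(1)}$ enters the divisor argument; the difficulty is that this shift depends on $p$ and so interferes with the interchange of summation used in the averaging. A complementary device is that, for fixed $\vec x,\vec y$ whose products are not identically equal, $\Delta$ is a polynomial in $u$ of degree at most $\nu-1$, so $p\mid\Delta$ holds for at most $\nu-1$ residues $u\bmod p$; this controls the average of $R(p)$ over $u$ and should let one pass from a fixed-$u$ statement to one uniform in $u$. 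For the application it already suffices to fix $u$ and $h$ and to allow the exceptional set of primes to depend on them, in which case the averaging argument above is enough.
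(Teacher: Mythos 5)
The paper never proves this lemma: it is quoted directly from~\cite{BGKS}, so there is no internal argument to compare yours against. There is, however, a telling passage in Section~\ref{sec:Comm}, where the author writes down exactly your decomposition --- $\sum_{p\in[T,2T]}I_{p,\nu}(u,h)$ bounded by a diagonal count $K_{\nu}(u,h)$ times the number of primes plus a sum of $\omega(|v_1\cdots v_{\nu}-w_1\cdots w_{\nu}|)$ over the off-diagonal pairs --- and presents it as a way to get a \emph{stronger} average bound ($h^{\nu+o(1)}+h^{2\nu+o(1)}T^{-1+o(1)}$, which matches what your Markov step delivers) but only under the hypothesis $u=h^{O(1)}$. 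That is, the author treats your argument as a supplementary remark valid for small shifts, not as a proof of Lemma~\ref{lem:GProdSet AlmostAll}; the obstacle you flagged at the end is the real content of the lemma.

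The gap is genuine and neither of your proposed fixes closes it. The lemma asserts one exceptional set of $o(\pi(T))$ primes valid simultaneously for \emph{all} $u$ and all $h<p$; since $I_{p,\nu}(u,h)$ depends only on $u\bmod p$, this amounts to a bound for every residue class modulo $p$ and every $h$, and a first-moment argument over $p$ yields one exceptional set per pair $(u,h)$ --- a union bound over the $\asymp p^{2}$ relevant pairs is hopeless. Even for a single fixed $u$, your off-diagonal step needs $\log|\Delta|/\log P=O_{\nu}(1)$ and hence $|u|\le T^{O(1)}$; reducing $u$ modulo $p$ makes $\Delta$ depend on $p$ and, as you note, breaks the interchange of summation, while the ``$\Delta(u)$ has at most $\nu-1$ roots mod $p$'' observation controls only the average of $R(p)$ over residues $u$, not its value at the one residue you need. (The diagonal is the easier half to make uniform: for $|u|>(2h)^{\nu}$ equality of the products forces equality of all elementary symmetric functions, so $\vec{y}$ is a permutation of $\vec{x}$, and for $|u|\le(2h)^{\nu}$ the divisor bound does give $h^{o(1)}$; your appeal to $\tau_{\nu}(A)\le A^{o(1)}=h^{o(1)}$ as written already presupposes $u=h^{O(1)}$.) Finally, your fallback --- that a fixed-$(u,h)$ version suffices for the application --- does not hold: Theorem~\ref{thm:Bound} is claimed uniformly over $u_1,\ldots,u_n$, the prime is chosen after the box $\fB$ is given, and one cannot reduce the $u_i$ modulo $p$ before selecting $p$. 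So what you have proved is a correct but strictly weaker statement ($u$ and $h$ fixed, $|u|\le T^{O(1)}$, exceptional set depending on $u,h$); for the lemma as stated one must use the genuinely different argument of~\cite{BGKS}.
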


\section{Main Result}

We are now able to present our main result.
 
\begin{thm}
\label{thm:Bound}  
Let at least two of the polynomials 
$f_1(X),\ldots, f_1(X)\in \Z[X]$ be of 
positive degree and let $k_1,\ldots, k_n\ge 1$ be odd 
integers. 
For a sufficiently small  $\varepsilon > 0$
we have  
$$
 N_{a, \vec{f}, \vec{k}}(p,\fB) \ll h^{n-4 + \varepsilon} 
$$
provided that 
$$
n \ge  225 \varepsilon^{-2} + 6. 
$$
\end{thm}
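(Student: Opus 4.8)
The plan is to pass to a congruence modulo a well-chosen prime $p$ and estimate the resulting character sums. Writing $x_i = u_i + z_i$ with $z_i$ running over $[1,h]$, fix a parameter $\delta>0$ (to be optimised at the end) and, using that the estimate of Lemma~\ref{lem:GProdSet AlmostAll} holds for a density-one set of primes, choose a ``good'' prime $p$ with $h\asymp p^{1/4+\delta}$, so that both $h\ge p^{1/4+\delta}$ (the range of Lemma~\ref{lem:MixedSum}) and $h\le p^{1/2}$ hold. Every integer solution reduces to a solution of $f_1(x_1)+\ldots+f_n(x_n)\equiv a x_1^{k_1}\cdots x_n^{k_n}\pmod p$ with $\vec x\in\fB$, so it suffices to bound the number of residue solutions with all $x_i\not\equiv0\pmod p$; the integer solutions having some $x_i\equiv0\pmod p$ are treated separately at the end.

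For the solutions with all $x_i\not\equiv 0$, I would detect the additive relation with the additive characters $\ep(\cdot)$ and expand $\ep(-\lambda a\,x_1^{k_1}\cdots x_n^{k_n})$ by Fourier inversion over $\F_p^*$ in terms of the Gauss sums $G(\chi,\cdot)$. This turns the count into $\frac{1}{p(p-1)}\sum_{\lambda}\sum_{\chi}G(\chi,-\lambda a)\prod_{i=1}^n S_i(\chi,\lambda)$, where $S_i(\chi,\lambda)=\sum_{x_i}\overline{\chi^{k_i}}(x_i)\,\ep(\lambda f_i(x_i))$ is a mixed character sum of exactly the shape covered by Lemma~\ref{lem:MixedSum}. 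The term $\lambda=0$, $\chi=\chi_0$ gives the main term $\sim h^n/p$, which is $h^{n-4+O(\delta)}$ for our choice of $p$; by Lemma~\ref{lem:Gauss} the terms with $\lambda=0$, $\chi\ne\chi_0$ vanish.

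It remains to bound the two error ranges. For $\chi=\chi_0$, $\lambda\ne0$ the factors $S_i(\chi_0,\lambda)$ are pure exponential sums, of size $\asymp h$ when $\deg f_i=0$; here I use the hypothesis that at least two of the $f_i$ have positive degree, estimating the corresponding two factors by a second moment in $\lambda$ (orthogonality gives $\sum_\lambda|S_i(\chi_0,\lambda)|^2\ll p h$), which keeps this contribution below $h^{n-4+\varepsilon}$. For $\chi\ne\chi_0$ I exploit that, by Lemma~\ref{lem:MixedSum} together with its Burgess special case for constant $f_i$, one has $|S_i(\chi,\lambda)|\ll hp^{-\kappa(\delta)}$ for every index $i$ with $\chi^{k_i}\ne\chi_0$, so that each such factor contributes a genuine saving $p^{-\kappa(\delta)}$ against the factor $p^{1/2}=|G(\chi,\lambda)|$. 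To sum over $\chi$ without losing a whole factor of $p$, I pass to a high even moment in a single variable: $\sum_\chi|S_i(\chi,\lambda)|^{2\nu}\le(p-1)\#\{\vec z,\vec w\in[1,h]^\nu:\ \prod_j(u_i+z_j)^{k_i}\equiv\prod_j(u_i+w_j)^{k_i}\}$, and since the $k_i$ are odd this $k_i$-th-power congruence reduces, via the $\gcd(k_i,p-1)$ roots of unity and Cauchy--Schwarz, to the product congruence counted by $I_{p,\nu}(u_i,h)$, giving $\sum_\chi|S_i(\chi,\lambda)|^{2\nu}\ll p\,I_{p,\nu}(u_i,h)$ by Lemma~\ref{lem:GProdSet AlmostAll}. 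A H\"older splitting of $\prod_i|S_i|$ between the pointwise bound and this moment bound, followed by the $\lambda$-summation, then controls the range $\chi\ne\chi_0$.

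The main obstacle is precisely this last optimisation. Since $\kappa(z)\sim z^2$, each of the $\sim n$ saving factors contributes only $p^{-\kappa(\delta)}\approx p^{-\delta^2}$, whereas the main term and the choice $h\asymp p^{1/4+\delta}$ already cost a loss of size $h^{O(\delta)}$; balancing the total gain against the $p^{1/2}$ coming from the Gauss sums forces $\varepsilon\asymp\delta$ together with $n\delta^2\gg1$, and it is this balance that produces the threshold $n\ge225\varepsilon^{-2}+6$. Finally, the integer solutions with some $x_i\equiv0\pmod p$ are handled by fixing that coordinate (at most one admissible value, since $h<p$): the remaining integer equation is again of Markoff--Hurwitz--Dwork type in $n-1$ variables, with modified coefficient and constant term, so that the same method (with a descent on $n$, keeping at least two positive-degree factors available) bounds this contribution by $O\(h^{n-5+\varepsilon}\)$, which is admissible.
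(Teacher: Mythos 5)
Your proposal is correct and follows essentially the same route as the paper: reduce modulo a well-chosen prime $p\asymp h^{4-\varepsilon}$ (so $h\asymp p^{1/4+\delta}$), expand via Gauss sums into the mixed sums $S_i(\chi,\lambda)$, use Lemma~\ref{lem:MixedSum} for the pointwise saving on most factors and Lemma~\ref{lem:GProdSet AlmostAll} via a H\"older/high-moment argument (the paper fixes the sixth moment on six factors, i.e.\ $\nu=3$) for the $\chi$-summation, and handle the principal-character range with the two non-constant $f_i$ by Cauchy--Schwarz and orthogonality. The only cosmetic differences are that the paper imposes $p\equiv 3\pmod{k_1\cdots k_n}$ to force $\gcd(k_1\cdots k_n,p-1)=1$ where you instead track the roots of unity, and it absorbs the $x_i\equiv 0$ solutions into the cited identity rather than treating them by descent.
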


\begin{proof} Let $p$ be an arbitrary prime.
Clearly 
\begin{equation}
\label{eq:N Np}
N_{a, \vec{f}, \vec{k}}(\fB) \le N_{a, \vec{f}, \vec{k}}(p,\fB), 
\end{equation}
where $N_{a, \vec{f}, \vec{k}}(p, \fB)$
is the number of solutions to the congruence
\begin{equation}
\label{eq:cong}
f_1(x_1) + \ldots + f_n(x_n)  \equiv a x_1^{k_1} \ldots x_n^{k_n} \pmod p
\end{equation}
with $(x_1, \ldots, x_n) \in \fB$.

It is clear that one can choose $p$ in the interval
\begin{equation}
\label{eq:size}
h^{4 - \varepsilon} \le p  \le 2h^{4 - \varepsilon}
\end{equation}
and also in the arithmetic progression 
\begin{equation}
\label{eq:progr}
p \equiv 3 \pmod {k_1\ldots k_n}
\end{equation}
for which the bound of Lemma~\ref{lem:GProdSet AlmostAll} 
holds with $\nu = 3$. 

Note that since $k_1, \ldots, k_n$ are odd, 
the congruence~\eqref{eq:progr} implies that  
\begin{equation}
\label{eq:gcd}
\gcd(k_1\ldots k_n,p-1)=1.
\end{equation}

Without loss of generality, we assume that the polynomials 
$f_1$ and $f_2$ are of 
positive degree.
Further, we can also assume that $p$ is sufficiently large  
so that $\gcd(a,p) =1$ and also the leading coefficients 
of the polynomials $f_1$ and $f_2$ are relatively prime to $p$,
so the positivity of the degree is preserved.

We now proceed as in the proof of~\cite[Theorem~3.2]{Shp}.
Let 
$$
S_i(\chi; \lambda) = \sum_{x=u_i+1}^{u_i+h} \chi^{k_i}(x)
\ep\(\lambda f_i(x) \), \quad i =1, \ldots, n.
$$
Then  by~\cite[Equation~(3.3)]{Shp}, under the 
condition~\eqref{eq:gcd}, we have:
\begin{equation}
\label{eq:R1R2}
 N_{a, \vec{f}, \vec{k}}(p,\fB)
- \frac{h^n}{p} \ll  \frac{1}{p^2}\(R_1  + R_2\), 
\end{equation}
where
\begin{equation*}
\begin{split}
R_1 &=   \sum_{\lambda \in \F_p} \sum_{\chi\in \cX^*}
 |\overline{G}(\chi, \lambda) |
\prod_{i=1}^n |S_i(\chi, \lambda)| ,\\
 R_2 &=   \sum_{\lambda \in \F_p^*}  
 |\overline{G}(\chi_0, \lambda) |
\prod_{i=1}^n |S_i(\chi_0, \lambda)|,
\end{split}
\end{equation*}
and $\overline{G}(\chi, \lambda) $ is the complex conjugate of the Gauss sum. 

To estimate $R_1$ we first use  Lemmas~\ref{lem:Gauss} and~\ref{lem:MixedSum} 
and infer that
$$
R_1 \le  h^{n-6} p^{-(n-6)\kappa(\delta)  +1/2} 
 \sum_{\lambda \in \F_p} \sum_{\chi\in \cX^*}
 \prod_{i=1}^6 |S_i(\chi; \lambda)|,
$$
where the function $\kappa(z)$ is as in Lemma~\ref{lem:MixedSum}
and $\delta$ is given 
\begin{equation}
\label{eq:delta}
\delta = \frac{1}{4-\varepsilon} -\frac{1}{4} = \frac{\varepsilon}{4(4-\varepsilon)}
>  \frac{\varepsilon}{16} . 
\end{equation}

Using the H{\"o}lder inequality, we obtain
$$
 \sum_{\chi\in \cX^*}
 \prod_{i=1}^6 |S_i(\chi; \lambda)| 
 \le  \(\prod_{i=1}^6  \sum_{\chi\in \cX^*}  
 |S_i(\chi; \lambda)|^6\)^{1/6} .
$$

Now using the orthogonality of characters we see that 
$$
\sum_{\chi\in \cX^*} |S_i(\chi; \lambda)|^6 \le
\sum_{\chi\in \cX} |S_i(\chi; \lambda)|^6 = (p-1) I_{p, 3}(u_i,h) .
$$
Applying the bound of Lemma~\ref{lem:GProdSet AlmostAll}, which 
is possible due to our choice of $p$, we derive 
$$
R_1 \ll    h^{n-6+o(1)} p^{-(n-6)\kappa(\delta) +5/2}   \(h^3 + h^{11/2}p^{-1/2}\). 
$$
We see that  under the condition~\eqref{eq:size} we have $h^3 < h^{11/2}p^{-1/2}$
(provided $\varepsilon$ is sufficiently small), 
hence the last bound simplifies as 
\begin{equation}
\label{eq:R1 bound}
R_1 \ll    h^{n-1/2+o(1)} p^{-(n-6)\kappa(\delta) +2}  . 
\end{equation}

For $R_2$ we proceed exactly as in the proof of~\cite[Theorem~3.3]{Shp}
and derive
\begin{equation}
\label{eq:R2 bound}
R_2 \ll    h^{n-1} p.
\end{equation}
Indeed,  it follows immediately  
from Lemma~\ref{lem:Gauss} and the trivial bound 
$$
|S_i(\chi_0; \lambda)| \le h, \qquad i = 3, \ldots, n,
$$
that 
$$
R_2 \le    \sum_{\lambda \in \F_p^*}  
\prod_{i=1}^n |S_i(\chi_0, \lambda)| \le 
h^{n-2}\sum_{\lambda \in \F_p} 
|S_1(\chi_0; \lambda)| |S_2(\chi_0; \lambda)|.
$$
Using the Cauchy inequality and the orthogonality 
of exponential functions, because the polynomials 
$f_1$ and $f_2$ are not constant modulo $p$, we obtain 
\begin{equation*}
\begin{split}
 \sum_{\lambda \in \F_p} 
|S_1(\chi_0; \lambda)|&|S_2(\chi_0; \lambda)| \\
&\le  \(   \sum_{\lambda \in \F_p}
 |S_1(\chi_0; \lambda)|^2 \sum_{\lambda \in \F_p}
 |S_2(\chi_0; \lambda)|^2\)^{1/2} \ll  p h, 
\end{split}
\end{equation*}
which implies~\eqref{eq:R2 bound}.

Substituting the bounds~\eqref{eq:R1 bound} and~\eqref{eq:R2 bound} in~\eqref{eq:R1R2}
we obtain
\begin{equation}
\label{eq:Np bound}
N_{a, \vec{f}, \vec{k}}(p,\fB)
=  \frac{h^n}{p}  + O\( h^{n-1/2+o(1)} p^{-(n-6)\kappa(\delta) }  +  h^{n -1} p^{-1}\).
\end{equation}
Recalling~\eqref{eq:size}, we see that
$$
h^{n-1/2} p^{-(n-6)\kappa(\delta) } \ll  h^n p^{-1}
$$
provided that 
\begin{equation}
\label{eq:cond n}
n \ge \kappa(\delta)^{-1} \(1 - \frac{1}{2(4 -\varepsilon)}\) + 6.
\end{equation}
Hence, in this case, combining~\eqref{eq:N Np} and~\eqref{eq:Np bound}, 
we obtain the desired bound.

Clearly, for any $\varepsilon>0$ we have 
$$
1 - \frac{1}{2(4 -\varepsilon)} < \frac{7}{8}.
$$
Furthermore,  we see from the property of the function $\kappa(z)$ 
and~\eqref{eq:delta} that for a sufficiently small $\varepsilon$ 
we see also 
$$
\kappa(\delta) \ge \frac{\varepsilon^2}{257}  .
$$
The result now follows from~\eqref{eq:cond n}. 
\end{proof}

\section{Comments}
\label{sec:Comm}

In Theorem~\ref{thm:Bound} the restriction on $n$ is chosen to 
guarantee the strongest possible bound $O(h^{n-4+\varepsilon})$ 
achieved within our approach.
Certainly for smaller values of $n$, using other estimates 
from~\cite{H-BP2} instead of Lemma~\ref{lem:MixedSum}, 
one can still get bounds stronger than
$O(h^{n-2})$ for smaller values of $n$ (the choice of $p$ has 
also to be modified 
too in order to achieve optimal results). 

Clearly the strength of the bound $O(h^{n-4 +\varepsilon})$
of Theorem~\ref{thm:Bound} is the limit of our method,
unless the range of $h$ in Lemma~\ref{lem:MixedSum}
is expanded. 
However one can possibly hope to reduce the lower bound on the 
number of variables $n$. Furthermore,  besides Lemma~\ref{lem:MixedSum} 
it also depends on the strength of the bound in 
Lemma~\ref{lem:GProdSet AlmostAll}.
Here in some cases one can do better.
We essentially need to show the existence of a prime $p$ in 
a dyadic interval $[T, 2T]$ with a small value of $I_{p,\nu}(u,h)$.
It is easy to see 
that 
\begin{equation*}
\begin{split}
\sum_{p \in [T, 2T]}  I_{p,\nu}(u,h)
\le& \( \pi(2T)-\pi(T) \) K_{\nu}(u,h)
\\
&+ \sum_{v_1,w_1,\ldots, v_{\nu},w_{\nu}=1}^h 
\omega\(|v_1\ldots v_{\nu}- w_1 \ldots w_{\nu}|\),
\end{split}
\end{equation*}
where $K_{\nu}(u,h)$ is the number of integer solutions to the 
equation
\begin{equation*}
\begin{split}
(x_1+u)\ldots&(x_{\nu}+u)= (y_1+u)\ldots(y_{\nu}+u),\\
1\le &x_j,y_j  \le   h_j,  \qquad j =1, \ldots, \nu, 
\end{split}
\end{equation*}
and $\omega(m)$ denotes the number of prime divisors 
of an integer $m$ (where we set $\omega(1) = \omega(0) = 0$). 
If $u$ is not too large compared to $h$ then this approach 
leads to a stronger result (we also refer to~\cite{BGKS} 
for various bounds on $K_{\nu}(u,h)$). In particular,
it is now easy to show that if $u = h^{O(1)}$ then 
$$
\frac{1}{\( \pi(2T)-\pi(T) \)}
\sum_{p \in [T, 2T]}  I_{p,\nu}(u,h)
\le h^{\nu + o(1)} + h^{2\nu + o(1)}T^{-1+o(1)}.
$$

It is also interesting to remove the condition on the parity 
of $k_1, \ldots, k_n$. If some of $k_1, \ldots, k_n$ are even that we 
take $p$ to satisfy 
$$
p \equiv 3 \pmod {2k_1\ldots k_n}
$$
instead of~\eqref{eq:progr}, and then instead of~\eqref{eq:gcd}
we obtain
$$
\gcd(k_i,p-1) \le 2.
$$
We now have to consider separately the contribution from 
the quadratic character $\chi_2$, namely,
$$
R_3 = \sum_{\lambda \in \F_p} 
 |\overline{G}(\chi_2, \lambda) |
\prod_{i=1}^n |S_i(\chi_2, \lambda)|.
$$
Clearly, if  $k_i$ is even that  Lemma~\ref{lem:MixedSum}
does not apply to $|S_i(\chi_2, \lambda)|$.  However, if $f_i$ is a
of degree $\deg f_i \ge 2$, 
one can use instead  estimates of exponential sums with polynomials, 
for example, the bound of Wooley~\cite{Wool3}.
The strength of the final result obtained along these lines, 
depends on the various assumptions 
on the degrees of $f_1, \ldots, f_n$ and on the number of even
integers among $k_1, \ldots, k_n$.



\section{Acknowledgment}


The author would like to thank Roger Heath-Brown and Lillian  Pierce
for informing him about their work~\cite{H-BP2} when it was still 
in progress and then sending him a preliminary draft. 
The author is also grateful tp Oscar Marmon for many useful comments.

This work was supported in part by the  ARC Grant DP130100237.

\end{document}